\title[Canonical polynomial van der Waerden theorem]{A short proof of \\ the canonical polynomial van der Waerden theorem}
\author{Jacob Fox}
\author{Yuval Wigderson}
\address{Fox, Wigderson: Department of Mathematics, Stanford University, Stanford, CA, USA}
\email{\{jacobfox,yuvalwig\}@stanford.edu}
\author{Yufei Zhao}
\address{Zhao: Department of Mathematics, Massachusetts Institute of Technology, Cambridge, MA, USA} 
\email{yufeiz@mit.edu}
\thanks{Fox is supported by a Packard Fellowship and by NSF award DMS-1855635.
Wigderson is supported by NSF GRFP grant DGE-1656518. 
Zhao is supported by NSF award DMS-1764176, the MIT Solomon Buchsbaum Fund, and a Sloan Research
Fellowship.}
\newtheorem{thm}{Theorem}
\newtheorem{lem}[thm]{Lemma}
\theoremstyle{definition}
\theoremstyle{remark}
\newcommand\N{\mathbb{N}}
\newcommand\Z{\mathbb{Z}}
\newcommand{\sabs}[1]{\lvert#1\rvert}
\newcommand{\abs}[1]{\left\lvert#1\right\rvert}
\newcommand{\wh}[1]{\widehat{#1}}
\newcommand\dd{\,\mathrm{d}}
\begin{document}

\begin{abstract}
	We present a short new proof of the canonical polynomial van der Waerden theorem, recently established by Gir\~ao.
\end{abstract}

\maketitle

Gir\~ao \cite{Girao} recently proved the following canonical version of the polynomial van der Waerden theorem. Here a set is \emph{rainbow} if all elements have distinct colors. We write $[N]:=\{1, \dots, N\}$.

\begin{thm}[\cite{Girao}]\label{thm:girao}
	Let $p_1,\ldots,p_k$ be distinct polynomials with integer coefficients and $p_i(0)=0$ for each $i$. For all sufficiently large $N$, every coloring of $[N]$ contains a sequence $x+p_1(y),\ldots,x+p_k(y)$  (for some $x,y \in \N$) that is monochromatic or rainbow. 
\end{thm}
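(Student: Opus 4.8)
The plan is to argue by contradiction. Suppose that for arbitrarily large $N$ there is a coloring $\chi\colon [N]\to C$ containing neither a monochromatic nor a rainbow sequence of the required form; I will derive a contradiction for $N$ large in terms of $p_1,\dots,p_k$. Call $(x,y)\in\N^2$ \emph{admissible} if $x+p_\ell(y)\in[N]$ for all $\ell$, and for admissible $(x,y)$ let $P(x,y)$ be the partition of $[k]$ in which $\ell\sim\ell'$ iff $\chi(x+p_\ell(y))=\chi(x+p_{\ell'}(y))$. The hypothesis says precisely that $P(x,y)$ is always a \emph{nontrivial} partition: not the one-block partition (which would give a monochromatic sequence) and not the all-singletons partition (a rainbow one). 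Ignoring the finitely many $y$ with $p_i(y)=p_j(y)$ for some $i\ne j$ costs nothing.

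The engine is to run a bounded auxiliary coloring through the polynomial van der Waerden theorem. Fix $y_0\in\N$ with $p_1(y_0),\dots,p_k(y_0)$ pairwise distinct, and set $\Phi(n):=P(n,y_0)$, a coloring of an interval of length $N-O(1)$ by the finitely many nontrivial partitions of $[k]$. By pigeonhole some value $P^*$ of $\Phi$ is attained on a set of density bounded below (in terms of $k$ only); choosing $i\ne j$ in a common block of $P^*$ and putting $e:=p_j(y_0)-p_i(y_0)\ne 0$, we get a positive-density set $B'\subseteq[N]$ with $\chi(m)=\chi(m+e)$ for all $m\in B'$. So a bad coloring must agree with a fixed nonzero translate of itself on a large set; morally it is forced to be almost periodic with bounded period.

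Converting this into an honest monochromatic sequence $x+p_1(y),\dots,x+p_k(y)$ is where the real work lies. If one had a long monochromatic arithmetic progression of common difference $e$, one would be done: inside it, taking $y$ to be a multiple of $e$ makes $e\mid p_\ell(ey')$ for every $\ell$, so a whole polynomial sequence fits inside the progression and is monochromatic. But a positive-density set on which $\chi(m)=\chi(m+e)$ need not contain even a two-term progression of common difference exactly $e$ (take a union of length-$e$ blocks), so one cannot stop here. The natural remedy is to iterate: restrict to the arithmetic progression of common difference $e$ on which $B'$ is densest and rescale by $e$, which replaces $p_1,\dots,p_k$ by the polynomials $\widetilde p_\ell(y):=p_\ell(ey)/e$ — again integer-valued, distinct, and vanishing at $0$ — and replaces $\chi$ by its restriction; re-running the reduction piles up translation-invariances, and one hopes that after boundedly many rounds the invariant set has density close to $1$, cutting the number of effective colors down to a bounded number and yielding a monochromatic sequence via one last application of polynomial van der Waerden. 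Making this bootstrap terminate, while keeping the growing modulus small against the fixed value of $N$, is the step I expect to be the main obstacle.

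In parallel I would pursue a density dichotomy, which handles the monochromatic side cleanly: if some color class has density bounded below by a constant, the polynomial Szemer\'edi theorem of Bergelson and Leibman already gives a monochromatic sequence inside it. When instead every color class is small, most sequences should be rainbow by a counting argument — for each pair $i<j$ the number of admissible $(x,y)$ with $\chi(x+p_i(y))=\chi(x+p_j(y))$ is $\sum_c \#\{(x,y): x+p_i(y),\,x+p_j(y)\in A_c\}$, and one wants this to be a vanishing fraction of all admissible pairs as the classes shrink. For linear $p_i$ this is an elementary second-moment estimate; for higher degree it appears to need a supersaturation statement (a small set contains only a small fraction of all $(p_j-p_i)$-pairs), itself of polynomial-Szemer\'edi flavor. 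I expect this supersaturation, or the bootstrap above, to be the crux of a genuinely short proof.
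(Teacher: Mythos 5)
Your second, ``density dichotomy'' paragraph is exactly the strategy of the paper, but as written it has a genuine gap at precisely the point you flag: the supersaturation statement for degree $\ge 2$ is neither proved nor correctly identified in nature, and it is the whole content of the argument. Two corrections are needed. First, the dichotomy must be stated \emph{locally}: you need that (in the absence of a monochromatic sequence) every color class has density at most $\varepsilon$ on every sufficiently long \emph{subinterval} of $[N]$ — this follows from Bergelson--Leibman applied to translates, using $p_i(0)=0$ — and not merely that each class is globally small. Global smallness is not enough for the counting: a color class could be globally sparse but occupy a full window of length about $n^{d}$ (where $n\sim N^{1/D}$ is the range of $y$ and $d=\deg(p_j-p_i)$), and then it contains a constant proportion of its admissible pairs; one must chop $[N]$ into overlapping windows of length comparable to $\max_{y\le n}|f(y)|=O(n^d)$ and use the local density bound inside each window. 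Second, and more importantly, the pair-counting lemma itself is not ``of polynomial-Szemer\'edi flavor'' and does not follow from a second-moment count once $d\ge 2$; the paper proves it by Fourier analysis. Writing $\wh 1_A(\theta)=\sum_{x\in A}e^{2\pi i\theta x}$ and $F(\theta)=\sum_{y=1}^n e^{2\pi i\theta f(y)}$, the number of pairs $(x,y)\in A\times[n]$ with $x+f(y)\in A$ equals $\int_0^1 \sabs{\wh 1_A(\theta)}^2F(\theta)\dd\theta$, and applying H\"older with exponent $s=8^{d-1}$, the bound $\sabs{\wh 1_A}\le\sabs{A}$, Parseval, and Linnik's (Hua's) moment estimate $\int_0^1\sabs{F(\theta)}^{s}\dd\theta=O(n^{s-d})$ gives $O(\sabs{A}^{1+1/s}n^{1-d/s})$; combined with the windowing this yields $O(\varepsilon^{1/s}\sabs{A}n)$ non-rainbow pairs per class, hence $O(\varepsilon^{1/s}Nn)$ in total, which beats the $\Theta(Nn)$ admissible pairs for small $\varepsilon$. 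Without some such analytic input (or an equivalent), your counting step does not close.

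Your first approach (extracting a translation-invariance $\chi(m)=\chi(m+e)$ on a dense set and bootstrapping toward almost periodicity) is not used in the paper and, as you yourself note, you have no mechanism to make the iteration terminate while controlling the growing modulus; a dense set with $\chi(m)=\chi(m+e)$ indeed need not contain the structure you want, and piling up invariances for different moduli does not obviously reduce the number of effective colors. I would drop that route entirely: the local-density dichotomy plus the Weyl-sum moment bound already gives a complete short proof.
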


Gir\~ao's proof uses a color-focusing argument.
Here we give a new short proof of Theorem~\ref{thm:girao}, deducing it from the polynomial Szemer\'edi's theorem of Bergelson and Leibman \cite{BeLe}.

\begin{thm}[\cite{BeLe}]\label{thm:bele}
	Let $p_1,\ldots,p_k$ be distinct polynomials with integer coefficients and $p_i(0)=0$ for each $i$. Let $\varepsilon>0$. For all $N$ sufficiently large, every $A \subset [N]$ with $|A| \geq \varepsilon N$ contains $x+p_1(y),\ldots,x+p_k(y)$ for some $x,y \in \N$.
\end{thm}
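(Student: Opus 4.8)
The plan is to follow Bergelson and Leibman and deduce this density statement from a theorem on multiple recurrence in ergodic theory. By a standard compactness argument it suffices to prove that every $A \subseteq \N$ of positive upper Banach density contains a configuration $x + p_1(y), \dots, x + p_k(y)$ with $x, y \in \N$: if the version in $[N]$ failed, for some $\varepsilon > 0$, along arbitrarily large $N$, then passing to a limit along a subsequence of such counterexamples produces such an $A$ with no configuration. Given $A$, Furstenberg's correspondence principle produces an invertible measure-preserving system $(X, \mathcal B, \mu, T)$ and a set $E \in \mathcal B$ with $\mu(E)$ equal to the upper Banach density of $A$ such that, for every finite list $m_1, \dots, m_k$ of integers,
$$
\bar d\bigl((A - m_1) \cap \cdots \cap (A - m_k)\bigr) \;\ge\; \mu\bigl(T^{-m_1}E \cap \cdots \cap T^{-m_k}E\bigr).
$$
Applying this with $m_i = p_i(y)$, the problem reduces to the \emph{polynomial multiple recurrence theorem}: for every such system, every $E$ with $\mu(E) > 0$, and all integer polynomials $p_1, \dots, p_k$ with $p_i(0) = 0$, there is some $y \in \N$ with $\mu\bigl(T^{-p_1(y)}E \cap \cdots \cap T^{-p_k(y)}E\bigr) > 0$. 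Indeed, then $(A - p_1(y)) \cap \cdots \cap (A - p_k(y))$ has positive density and so contains some $x$, which gives the configuration.

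To prove multiple recurrence I would run Bergelson's \textbf{PET induction} (the polynomial exhaustion technique). One studies the $L^2(\mu)$-valued averages $\tfrac1N\sum_{y=1}^N \prod_{i=1}^k T^{p_i(y)}f_i$ for bounded functions $f_i$, with the aim of showing both that they converge and that $\liminf_N \tfrac1N\sum_{y=1}^N \mu\bigl(\bigcap_i T^{-p_i(y)}E\bigr) > 0$. Attach to each finite family of integer polynomials a \emph{complexity}: a well-founded invariant, essentially Bergelson's ``weight'', a suitably normalised and lexicographically ordered record of how many inequivalent polynomials of each degree occur. The main tool is the van der Corput lemma in Hilbert space, which bounds $\limsup_N \bigl\|\tfrac1N\sum_y \prod_i T^{p_i(y)}f_i\bigr\|_2^2$ by an average over shifts $h$ of analogous quantities for the ``differenced'' family $\{\,p_i(y+h) - p_j(y)\,\}_{i} \cup \{\,p_i(y) - p_j(y)\,\}_{i \ne j}$ (choosing $j$ so that one of these polynomials is identically $0$), which --- for the right choice of invariant --- has strictly smaller complexity. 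Iterating brings one down to the base case of linear polynomials, and the same argument shows the relevant limits depend only on the projections of the $f_i$ onto a \emph{characteristic factor}; by the Host--Kra and Ziegler structure theorems in the linear case, this factor is a pro-nilsystem, an inverse limit of nilsystems $G/\Gamma$ with $T$ a group translation. So one may assume $X = G/\Gamma$ is a nilmanifold, $Tx = gx$, and $E$ is open with $\mu(E) > 0$.

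On a nilmanifold the averages can be analysed orbit by orbit. Writing $\mu\bigl(\bigcap_i T^{-p_i(y)}E\bigr) = \int_X \prod_i \mathbf 1_E\bigl(g^{p_i(y)}x\bigr)\dd\mu(x)$, Leibman's equidistribution theorem for polynomial orbits on nilmanifolds shows that for each $x$ the sequence $y \mapsto \bigl(g^{p_1(y)}x, \dots, g^{p_k(y)}x\bigr)$ equidistributes in a subnilmanifold $Y_x \subseteq X^k$, so $\tfrac1N\sum_{y=1}^N \prod_i \mathbf 1_E\bigl(g^{p_i(y)}x\bigr)$ converges to $\mu_{Y_x}(E^k)$, where $\mu_{Y_x}$ is the Haar measure of $Y_x$. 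Since $p_i(0) = 0$, the diagonal point $(x, \dots, x)$ lies in $Y_x$ (it is the value of the orbit at $y = 0$, and the orbit closure of a polynomial sequence on a nilmanifold is unchanged when $y$ runs over $\N$ rather than $\Z$); for $x \in E$ this point lies in the open set $E^k$, forcing $\mu_{Y_x}(E^k) > 0$. Integrating over the positive-measure set of $x \in E$ gives $\liminf_N \tfrac1N\sum_{y=1}^N \mu\bigl(\bigcap_i T^{-p_i(y)}E\bigr) > 0$, and in particular some single $y \in \N$ works.

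I expect the middle step to be the main obstacle. One must choose the complexity invariant carefully enough that van der Corput together with differencing provably lowers it, while keeping control of the limiting behaviour; and, more seriously, one must show that the polynomial averages are governed by a nilsystem characteristic factor, which is precisely where the full strength of the Host--Kra/Ziegler theory (or, as in Bergelson and Leibman's original argument, of Furstenberg's structure theorem for measure-preserving systems) is used. An alternative that avoids ergodic theory and gives effective bounds directly in $[N]$ is Peluse's finitary density-increment argument: a finitary version of PET induction bounds the number of configurations by a Gowers-type uniformity norm, after which a ``degree-lowering'' step reduces that norm to the main term. This route is of comparable technical depth.
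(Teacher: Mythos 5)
The paper does not prove this statement at all: Theorem~\ref{thm:bele} is the external black box of the whole note, quoted from Bergelson and Leibman precisely so that the canonical theorem can be deduced from it in one page. So the only comparison available is with the literature, and measured against that your outline is a roadmap rather than a proof. The reduction via compactness and the Furstenberg correspondence principle is fine and standard, but everything after that is where the theorem actually lives, and those steps are left as black boxes or asserted in a form that does not quite hold. Concretely: (i) the claim that van der Corput plus differencing ``brings one down to the base case'' and that ``by the Host--Kra and Ziegler structure theorems in the linear case, this factor is a pro-nilsystem'' conflates two different things --- Host--Kra/Ziegler identify the characteristic factors for \emph{linear} averages, and showing that the same pro-nilfactors are characteristic for a general polynomial family is a separate theorem (Host--Kra and Leibman, 2005), not a formal consequence of PET; proving that your complexity invariant strictly decreases under the specific differencing you describe is itself the technical core of PET and is only gestured at. (ii) The step ``so one may assume $X=G/\Gamma$ is a nilmanifold \dots and $E$ is open with $\mu(E)>0$'' fails as stated: passing to the characteristic factor replaces $1_E$ by its conditional expectation, which is a $[0,1]$-valued function, not the indicator of an open set, so the later argument ``the diagonal point lies in the open set $E^k$, forcing $\mu_{Y_x}(E^k)>0$'' needs an additional approximation/positivity argument (this can be done, but it is a genuine missing step, not bookkeeping).

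There is also an attribution issue worth flagging: what you sketch is the \emph{modern} proof of polynomial multiple recurrence and convergence, relying on Host--Kra, Ziegler and Leibman's equidistribution theorem for polynomial orbits on nilmanifolds, all of which postdate Bergelson--Leibman (1996). Their original argument proves recurrence (not convergence) via PET induction combined with Furstenberg-style structure theory --- weak mixing versus compact extensions and an IP/coloring ingredient --- and never passes through nilmanifolds. Either route is legitimate, but both are long and difficult, and neither is reproduced by the sketch: as written, the proposal records the strategy and the names of the needed theorems while leaving their content unproved, so it should be regarded as an accurate survey of how one \emph{would} prove Theorem~\ref{thm:bele}, not as a proof.
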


Our proof of Theorem~\ref{thm:girao} follows the strategy of Erd\H os and Graham \cite{ErGr}, who deduced a canonical van der Waerden theorem (i.e., for arithmetic progressions) using Szemer\'edi's theorem~\cite{Szemeredi75}.

We quote the following result, proved by Linnik \cite{Linnik} in his elementary solution of Waring's problem (see \cite[Theorem~19.7.2]{Hua}).
Note the left-hand side below counts the number of solutions $f(x_1) + \cdots + f(x_{s/2}) = f(x_{s/2+1}) + \cdots + f(x_s)$ with $x_1, \dots, x_s \in [n]$.

\begin{thm}[\cite{Linnik}]  \label{thm:moment}
Fix a polynomial $f$ of degree $d \geq 2$ with integer coefficients.
Let $s = 8^{d-1}$. Then
	\[
	\int_0^1 \abs{\sum_{x=1}^n e^{2\pi i \theta f(x)}}^{s} \dd\theta = O(n^{s-d}).
	\]
\end{thm}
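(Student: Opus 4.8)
The plan is to prove the equivalent counting statement by the Hardy--Littlewood circle method. Write $e(t):=e^{2\pi i t}$ and $S(\theta):=\sum_{x=1}^{n}e(\theta f(x))$, so the integral in question equals the number $J$ of solutions of $f(x_1)+\cdots+f(x_{s/2})=f(x_{s/2+1})+\cdots+f(x_s)$ with $x_1,\dots,x_s\in[n]$; after replacing $f$ by $-f$ if necessary we may assume its leading coefficient is positive, and we must show $J\ll_f n^{s-d}$. By Dirichlet's theorem every $\theta\in[0,1)$ satisfies $\abs{\theta-a/q}\le 1/(qQ)$ for some coprime $a,q$ with $1\le q\le Q:=n^{d-1/2}$; call $\theta$ a \emph{major-arc} point if the least such $q$ is at most $n^{1/2}$ and a \emph{minor-arc} point otherwise, write $\mathfrak M,\mathfrak m$ for the two sets, and split $\int_0^1\abs{S(\theta)}^{s}\dd\theta=\int_{\mathfrak M}+\int_{\mathfrak m}$. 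It suffices to bound each piece by $O_f(n^{s-d})$.

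On $\mathfrak m$ I would apply Weyl's inequality for a polynomial of degree $d$, which gives $\sup_{\theta\in\mathfrak m}\abs{S(\theta)}\ll_f n^{1-\delta}$ for a constant $\delta=\delta(d)>0$ (one may take $\delta$ arbitrarily close to $2^{-d}$); the relevant feature is that the strength of the estimate is controlled by the leading coefficient of $f$. Together with the trivial identity $\int_0^1\abs{S(\theta)}^{2}\dd\theta\le dn$ (for each $x$ there are at most $d$ values $y\in[n]$ with $f(y)=f(x)$), this gives
\[
\int_{\mathfrak m}\abs{S(\theta)}^{s}\dd\theta\le\Bigl(\sup_{\mathfrak m}\abs{S}\Bigr)^{s-2}\int_0^1\abs{S(\theta)}^{2}\dd\theta\ll_f n^{(1-\delta)(s-2)+1}=n^{\,s-1-\delta(s-2)},
\]
which is $\ll_f n^{s-d}$ as soon as $\delta(s-2)\ge d-1$. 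For $s=8^{d-1}$ this holds with a large margin for every $d\ge2$ (e.g.\ $2^{-d}(8^{d-1}-2)>d-1$), and this is the single place where it is essential that $s$ be taken as large as $8^{d-1}$ rather than, say, $2^d$.

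On $\mathfrak M$ I would carry out the classical computation. Near $a/q$ one has $S(a/q+\beta)=q^{-1}A(a,q)\,v(\beta)+(\text{error})$, where $A(a,q):=\sum_{x\bmod q}e(af(x)/q)$ satisfies $\abs{A(a,q)}\ll_f q^{\,1-1/d}$ (again a bound governed by the leading coefficient of $f$) and $v(\beta):=\int_0^{n}e(\beta f(t))\dd t$ obeys $\abs{v(\beta)}\ll_f\min\{n,\abs{\beta}^{-1/d}\}$. Raising to the $s$-th power, integrating $\beta$ over the arcs, and summing over the (disjoint) arcs bounds the main contribution by
\[
\Bigl(\sum_{q\le n^{1/2}}q^{\,1-s/d}\Bigr)\Bigl(\int_{-\infty}^{\infty}\abs{v(\beta)}^{s}\dd\beta\Bigr)\ll_f n^{s-d},
\]
since the $q$-sum (a truncated singular series) converges because $s>2d$ and $\int\abs{v}^{s}\dd\beta\ll_f n^{s-d}$ because $s>d$; the error terms are handled by the usual estimates and are of strictly smaller order. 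Adding the two pieces gives $J\ll_f n^{s-d}$.

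I expect the main obstacle to be not any single estimate but the bookkeeping required to run all of this for an arbitrary fixed polynomial $f$ with $f(0)=0$ in place of the model case $x^d$: one must check that Weyl's inequality and the bound on $A(a,q)$ are insensitive to the lower-order coefficients, and that the corresponding singular series and singular integral converge. It is also worth recording why a softer argument does not suffice: a direct iterated Weyl-differencing bound for $\abs{S}^{s}$ only yields $J\ll_f n^{s-1}$, losing a full factor $n^{d-1}$, so something like the major/minor-arc dissection is genuinely needed to reach the sharp exponent $s-d$. (An alternative, and the route Linnik actually took, replaces the circle method by a $p$-adic descent on $d$: choosing a prime $p\asymp n^{1/d}$ and separating the variables according to their residues modulo a suitable power of $p$ reduces the count to one of the same form for a polynomial of degree $d-1$, and the generous value $s=8^{d-1}$ is precisely what makes this recursion close.)
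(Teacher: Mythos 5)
Your outline is a legitimate route to Linnik's estimate, but note that the paper does not prove Theorem~\ref{thm:moment} at all: it is quoted as a black box from Linnik \cite{Linnik} (see \cite{Hua}), whose argument is the elementary $p$-adic descent on the degree that you describe in your closing parenthetical --- no circle method, which is what makes the exponent $s=8^{d-1}$ so generous and what made Linnik's solution of Waring's problem ``elementary.'' Your proposal instead runs the standard Hardy--Littlewood dissection, and the numerology checks out: on the minor arcs the Weyl saving $\delta\approx 2^{-d}$ combined with $\int_0^1\sabs{S}^2\le dn$ needs $\delta(s-2)>d-1$, which $s=8^{d-1}$ satisfies with room to absorb the $n^{\varepsilon}$ in Weyl's inequality, and the major-arc main term and error term are bounded exactly as you indicate (the error $O(q(1+\sabs{\beta}n^d))\ll n^{1/2}$ on arcs of total measure $O(n^{1-d})$ is harmless, and the $\varepsilon$-loss in the bound $\sabs{A(a,q)}\ll q^{1-1/d+\varepsilon}$ is absorbed since $s/d>2$). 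The one bookkeeping point to flag is slightly different from the one you flag: Weyl's inequality and the major-arc expansion are governed by rational approximations to the leading coefficient $a_d\theta$ of the phase, not to $\theta$ itself, so the dissection should be performed on $a_d\theta$ (or one transfers the Dirichlet approximation of $\theta$ at the cost of the constant factor $a_d$); the lower-order coefficients of $f$ genuinely play no role. In short: your approach is correct in outline and, being analytic, even gives the clean $O(n^{s-d})$ for much smaller $s$ than $8^{d-1}$, whereas the paper simply imports Linnik's elementary bound, whose virtue is avoiding all of this machinery at the price of the large exponent --- which is all the application in Lemma~\ref{lem:few-edges-local} needs.
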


\begin{lem}\label{lem:few-edges-local}
    Fix a polynomial $f$ of degree $d \ge 2$ with integer coefficients.
    For every $A \subset \Z$ and $n \in \N$, the number of pairs $(x,y) \in A \times [n]$ with $x+f(y) \in A$ is $O( \abs{A}^{1 + \frac{1}{s}} n^{1-\frac{d}{s}})$, where $s = 8^{d-1}$.
\end{lem}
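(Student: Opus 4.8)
\emph{Proof proposal.} The plan is to rewrite the quantity in question as a Fourier-analytic integral and then estimate it by Hölder's inequality, invoking Theorem~\ref{thm:moment} to handle the polynomial $f$. We may assume $A$ is finite, since otherwise the asserted bound is vacuous.

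Write $e(t):=e^{2\pi i t}$, and for $\theta\in[0,1]$ put $g(\theta):=\sum_{a\in A}e(\theta a)$ and $F(\theta):=\sum_{y=1}^n e(\theta f(y))$. First I would expand the product $|g(\theta)|^2 F(\theta)=\sum_{a,a'\in A}\sum_{y=1}^n e\bigl(\theta(a-a'+f(y))\bigr)$ and use $\int_0^1 e(\theta m)\dd\theta=[m=0]$; this shows that $\int_0^1 |g(\theta)|^2 F(\theta)\dd\theta$ counts precisely the triples $(a,a',y)\in A\times A\times[n]$ with $a'=a+f(y)$, i.e.\ it equals the number of pairs $(x,y)\in A\times[n]$ with $x+f(y)\in A$. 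In particular this number is at most $\int_0^1 |g(\theta)|^2\abs{F(\theta)}\dd\theta$.

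Next I would apply Hölder's inequality with the conjugate exponents $s$ and $\tfrac{s}{s-1}$, which gives
\[
  \int_0^1 |g|^2\abs{F}\;\le\;\Bigl(\int_0^1 \abs{F}^s\Bigr)^{1/s}\Bigl(\int_0^1 |g|^{2s/(s-1)}\Bigr)^{(s-1)/s}.
\]
By Theorem~\ref{thm:moment} the first factor is $O(n^{1-d/s})$. For the second factor I would use only the two elementary facts $|g(\theta)|\le\abs{A}$ for all $\theta$ and $\int_0^1 |g|^2=\abs{A}$ (Parseval); since $2s/(s-1)\ge 2$, these give $\int_0^1 |g|^{2s/(s-1)}\le \abs{A}^{2/(s-1)}\cdot\abs{A}=\abs{A}^{(s+1)/(s-1)}$, so the second factor is at most $\abs{A}^{(s+1)/s}=\abs{A}^{1+1/s}$. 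Multiplying the two estimates yields the claimed bound $O\bigl(\abs{A}^{1+\frac1s}n^{1-\frac{d}{s}}\bigr)$.

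I do not expect a genuine obstacle here; the only point requiring care is the bookkeeping of exponents. In particular, $s=8^{d-1}$ is exactly the value for which Linnik's estimate (Theorem~\ref{thm:moment}) is available and produces the factor $n^{1-d/s}$, while the elementary interpolation between the $L^\infty$ and $L^2$ bounds on $g$ is what makes the exponent of $\abs{A}$ come out to be precisely $1+\tfrac1s$ rather than something larger.
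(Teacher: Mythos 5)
Your proposal is correct and follows essentially the same route as the paper: express the count as $\int_0^1 |\wh 1_A|^2 F$, apply H\"older with exponents $s$ and $\tfrac{s}{s-1}$, bound the $F$-factor by Theorem~\ref{thm:moment}, and interpolate between the trivial $L^\infty$ bound and Parseval for the other factor. The exponent bookkeeping matches the paper's exactly.
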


\begin{proof}
    We write
    \[
	\wh 1_A(\theta) = \sum_{x \in A} e^{2\pi i \theta x} \qquad \text{ and } \qquad F(\theta) = \sum_{y=1}^n e^{2\pi i \theta f(y)}.
	\]
	Then the number of solutions to $z = x+f(y)$ with $x,z \in A$ and $y \in [n]$ is
	\begin{align*}
	\int_0^1 \sabs{\wh 1_A(\theta)}^2 F(\theta) \dd\theta
	&\le 
	\left(\int_0^1 \sabs{\wh 1_A(\theta)}^{\frac{2s}{s-1}} \dd\theta\right)^{1-\frac{1}{s}} 
	\left(\int_0^1 \sabs{F(\theta)}^{s} \dd\theta\right)^{\frac{1}{s}}
	&&\text{\small [H\"older]}
	\\
	&\le  \left( \abs A^{\frac{2}{s-1}} \int_0^1 \sabs{\wh 1_A(\theta)}^2\dd \theta \right)^{1-\frac{1}{s}} \cdot O(n^{1-\frac{d}{s}})
	&&\text{\small [$\sabs{\wh 1_A(\theta)} \leq \abs A$ and Theorem \ref{thm:moment}]}
	\\
	&=  
	\left(\abs{A}^{\frac{2}{s-1}} \abs{A} \right)^{1-\frac{1}{s}}
	\cdot O(n^{1-\frac{d}{s}}) 
	&& \text{\small [Parseval]}
	\\
	&= O(\abs{A}^{1 + \frac{1}{s}} n^{1-\frac{d}{s}}).  &&\qedhere
	\end{align*}
\end{proof}

\begin{lem}\label{lem:few-edges-global}
    Fix a polynomial $f$ of degree $d\ge 1$ with integer coefficients.
    Let $A \subset \Z$.
    Suppose that $|A \cap [x, x +L)| \le \varepsilon L$ for every $L \ge n^d$ and $x$.
    Then the number of pairs $(x,y) \in A \times [n]$ with $x+f(y) \in A$ is $O( \varepsilon^{1/s}  \abs{A} n)$, where $s = 8^{d-1}$.
\end{lem}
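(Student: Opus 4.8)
The plan is to reduce the statement to the local estimate of Lemma~\ref{lem:few-edges-local} by slicing $\Z$ into windows of width a suitable constant multiple of $n^d$, applying that lemma inside each window, and using the hypothesis to bound the density of $A$ on each window. The key point is that when $y \in [n]$ the two entries $x$ and $x+f(y)$ of a pair we wish to count differ by at most $O(n^d)$, so for a randomly shifted partition of $\Z$ into windows of width roughly $n^d$ the whole pair will usually lie inside a single window.

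Concretely, first fix a constant $M = M(f) \ge 1$ --- say, the sum of the absolute values of the coefficients of $f$ --- so that $|f(y)| \le M n^d$ for every $n$ and every $y \in [n]$, and set $L = 2M n^d$; since $L \ge n^d$, the hypothesis gives $|A \cap I| \le \varepsilon L$ for every interval $I$ of length $L$. For each shift $t \in \{0, 1, \dots, L-1\}$ let $I^t_j = [jL+t, (j+1)L+t)$, $j \in \Z$, be the corresponding partition of $\Z$, and write $A^t_j = A \cap I^t_j$. If $(x,y) \in A \times [n]$ is a pair with $x + f(y) \in A$, then $x$ and $x + f(y)$ lie at distance $|f(y)| \le M n^d = L/2$, so for at least half of the shifts $t$ these two points belong to a common part $I^t_j$. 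Averaging over $t$, the quantity $P$ we wish to bound satisfies
\[
P \;\le\; \frac{2}{L} \sum_{t=0}^{L-1}\ \sum_{j \in \Z}\ \#\bigl\{ (x,y) \in A^t_j \times [n] : x + f(y) \in A^t_j \bigr\}.
\]

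Now apply Lemma~\ref{lem:few-edges-local} to each set $A^t_j$: the innermost count is $O\bigl(|A^t_j|^{1+1/s} n^{1-d/s}\bigr)$, and since $|A^t_j| \le \varepsilon L = 2M\varepsilon n^d$ the factor $|A^t_j|^{1/s} n^{-d/s}$ is $O(\varepsilon^{1/s})$, so the count is $O(\varepsilon^{1/s} |A^t_j|\, n)$, with the implied constant depending only on $f$. Summing over $j$ using $\sum_j |A^t_j| = |A|$ gives $O(\varepsilon^{1/s} |A| n)$ for each $t$, hence for the average over $t$, which is the claimed bound. (We may assume $A$ finite and $\varepsilon < 1$, the statement being otherwise vacuous or trivial; and although Lemma~\ref{lem:few-edges-local} is stated for $d \ge 2$, the case $d = 1$ --- where $s = 1$ and the target is $O(\varepsilon |A| n)$ --- is immediate, since for fixed $x \in A$ the map $y \mapsto x + f(y)$ is injective and its image on $[n]$ is covered by $O(1)$ intervals of length $n^d = n$, each meeting $A$ in at most $\varepsilon n$ points.)

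I do not expect a genuine difficulty here: the one thing to be careful about is choosing the window width $L$ large enough in terms of $f$ that the ``same window'' probability stays bounded below by a positive constant --- this is the role of the factor $2M$ --- after which everything reduces to the exponents $1 + \tfrac{1}{s}$ and $1 - \tfrac{d}{s}$ balancing against $|A^t_j| \le 2M\varepsilon n^d$. (One could instead use a fixed partition together with a two-set version of Lemma~\ref{lem:few-edges-local}, counting pairs with $x \in B$ and $x + f(y) \in C$ for $B, C$ in nearby windows, proved by the same H\"older computation with an added Cauchy--Schwarz; the random shift just keeps the bookkeeping cleaner.)
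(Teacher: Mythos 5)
Your proof is correct and follows essentially the same route as the paper: localize to windows of width $O(n^d)$ so that each pair $x, x+f(y)$ is captured in a single window, apply Lemma~\ref{lem:few-edges-local} there using $|A \cap I| \le \varepsilon |I|$, and sum, with the $d=1$ case handled directly. The only difference is cosmetic bookkeeping --- you average over all $L$ shifts of a partition, whereas the paper uses the fixed overlapping windows $[im,(i+2)m)$ in which each element of $A$ lies exactly twice.
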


\begin{proof}
    If $d = 1$, then for every $x \in A$, the number of $y \in [n]$ so that $x + f(y) \in A$ is $O(\varepsilon n)$ by the local density condition on $A$. Summing over all $x \in A$ yields the desired bound $O(\varepsilon |A| n)$ on the number of pairs. From now on assume $d \ge 2$.
    
    Let $m = O(n^d)$ so that $|f(y)| \le m$ for all $y \in [n]$. 
    Let $A_i = A \cap [im, (i+2) m)$. 
    Then $|A_i| = O(\varepsilon m)$.
    Every pair $x,x+f(y) \in A$ with $y \in [n]$ is contained in some $A_i$, 
    and, by Lemma~\ref{lem:few-edges-local}, the number of pairs contained in each $A_i$ is $O(|A_i|^{1+\frac{1}{s}} n^{1-\frac{d}{s}}) = O((\varepsilon m)^{\frac{1}{s}} |A_i| n^{1-\frac{d}{s}}) = O(\varepsilon^{1/s} |A_i| n)$. Summing over all integers $i$ yields the lemma (each element of $A$ lies in precisely two different $A_i$'s).    
\end{proof}

\begin{proof}[Proof of Theorem \ref{thm:girao}]
    Choose a sufficiently small $\varepsilon > 0$ (depending on $p_1, \dots, p_k$). Consider a coloring of $[N]$ without monochromatic progressions $x+p_1(y), \dots, x+p_k(y)$. By Theorem~\ref{thm:bele}, every color class has density at most $\varepsilon$ on every sufficiently long interval. 
    
    Let $D = \max_{i\ne j} \deg (p_i - p_j)$. 
    Let $n$ be an integer on the order of $N^{1/D}$ so that $x+p_1(y), \dots, x+p_k(y) \in [N]$ only if $y \in [n]$. 
    For each color class $A$, applying Lemma \ref{lem:few-edges-global} to $f = p_i - p_j$ and summing over all $i\ne j$,
    we see that the number of pairs $(x,y) \in \Z \times [n]$ where at least two of $x + p_1(y), \dots, x + p_k(y)$ lie in $A$ is  $O(\varepsilon^{1/8^{D-1}} |A| n)$. Summing over all color classes $A$, we see that the number of non-rainbow progressions $x + p_1(y), \dots, x + p_k(y) \in [N]$ is $O(\varepsilon^{1/8^{D-1}} Nn)$. Since the total number of sequences $x + p_1(y), \dots, x + p_k(y) \in [N]$ is on the order of $Nn$, some such sequence must be rainbow, as long as $\varepsilon > 0$ is small enough and $N$ is large enough.
\end{proof}

\end{document}